\numberwithin{equation}{section}
\newtheorem{maintheorem}{Theorem}
\newtheorem{theorem}{Theorem}[section]
\newtheorem*{theorem*}{Theorem}
\newtheorem{proposition}[theorem]{Proposition}
\theoremstyle{definition}{

\newtheorem*{definition*}{Definition}

}
\theoremstyle{remark}{
\newtheorem*{remark*}{Remark}

}
\newcommand{\Z}{\mathbb Z}
\renewcommand{\P}{\mathbb{P}}
\newcommand{\gap}{\lambda}
\newcommand{\tmix}{t_\textsc{mix}}
\newcommand{\tv}{{\textsc{tv}}}
\renewcommand{\epsilon}{\varepsilon}
\renewcommand{\phi}{\varphi}
\date{}
\begin{document}
\title{Mixing of the upper triangular matrix walk}

\author{Yuval Peres}
\address{Yuval Peres\hfill\break
Microsoft Research\\
One Microsoft Way\\
Redmond, WA 98052-6399, USA.}
\email{peres@microsoft.com}
\urladdr{}

\author{Allan Sly}
\address{Allan Sly\hfill\break
Microsoft Research\\
One Microsoft Way\\
Redmond, WA 98052-6399, USA.}
\email{allansly@microsoft.com}
\urladdr{}

\begin{abstract}

We study a natural random walk over the upper triangular matrices, with entries in the field $\Z_2$, generated by steps which add row~$i+1$ to row $i$.  We show that the mixing time of the lazy random walk is $O(n^2)$ which is optimal up to constants.  Our proof makes key use of the linear structure of the group and extends to walks on the upper triangular matrices over the fields $\Z_q$ for $q$ prime.

\end{abstract}

\maketitle

\section{Introduction}

For $n\geq 2$ and $q$ a prime let $G_{n}(q)$ be the group of $n\times n$-upper triangular matrices under multiplication with entries in the field $\Z_q$ and 1 along the diagonal.  The upper triangular walk on $G_n(2)$ is the Markov chain where each step is given by choosing a random $1\leq i \leq n-1$ and adding row $i+1$ to row $i$.  The walk is stationary with respect to the uniform distribution on $G_n(2)$ and our main result establishes the order of the mixing time answering a question of Stong~\cite{Stong:95} and of  Arias-Castro, Diaconis and Stanley~\cite{ADS:04} and Problem~14 of~\cite{LPW}.

\begin{maintheorem}\label{t:main}
There exists a constant $C>0$ such that the mixing time of the lazy upper triangular walk  satisfies $\tmix^{(n)} \leq C n^2$.
\end{maintheorem}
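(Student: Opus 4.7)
The plan is to exploit the linear structure of the walk. A single step of the non-lazy chain is left-multiplication by a random elementary matrix $A_i := I + E_{i,i+1}$, so $M_t = A_{i_t}\cdots A_{i_1} M_0$ with $(i_t)$ i.i.d.\ uniform on $\{1,\ldots,n-1\}$ (and each step skipped with probability $1/2$ in the lazy version). Since the uniform measure on $G_n(2)$ is left-invariant, it suffices to study the random product $P_t := A_{i_t}\cdots A_{i_1}$ starting from the identity, which I analyse column by column.

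Fix $2 \le k \le n$. The $k$-th column of $P_t$ has the form $e_k + X^{(k)}_t$ for some $X^{(k)}_t \in \Z_2^{k-1}$, and (since $A_i v = v + v_{i+1} e_i$) $X^{(k)}_t$ is itself Markovian: if $i_t \ge k$, no change; if $i_t = k-1$, coordinate $k-1$ flips (the diagonal $1$ is XORed in); and if $i_t < k-1$, coordinate $i_t$ is replaced by $X^{(k)}_{i_t} \oplus X^{(k)}_{i_t+1}$. The stationary distribution is uniform on $\Z_2^{k-1}$, matching the $k$-th column of a uniform element of $G_n(2)$. The first task is to bound the mixing time of this single-column chain. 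I would proceed by induction on the coordinate from bottom to top: the bottom entry $X^{(k)}_{k-1}$ is a lazy $\Z_2$-walk with flip-rate $\Theta(1/n)$, so it mixes in $O(n)$ big-walk steps; and assuming that rows below row $j$ are already near-uniform, each update of row $j$ XORs in an essentially fair coin, so $\Theta(n)$ further big steps suffice. Iterating over the $k-1$ coordinates gives a column-$k$ mixing time of $O(nk) = O(n^2)$.

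The delicate second step is to combine the per-column estimates without losing a $\log n$ factor; a naive union bound over the $n$ columns would yield only an $O(n^2 \log n)$ bound. My plan is to work with the $\chi^2$ (or $L^2$) distance to stationarity rather than total variation: because the stationary distribution on $G_n(2)$ factorises as a product of uniform measures on the columns, characters of the abelian product $\bigoplus_{k=2}^n \Z_2^{k-1}$ form a natural basis in which to bound $\chi^2$; tracking how each non-trivial joint character contracts under one step of the big chain, and showing that the contraction is at least $1 - c/n^2$, would give a $\chi^2$ of $o(1)$ after $Cn^2$ steps, hence a total variation of $o(1)$ by Cauchy--Schwarz.

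The main obstacle lies in this $\chi^2$ estimate. Because the single-column chain is not an abelian-group random walk---the update $X_i \leftarrow X_i \oplus X_{i+1}$ couples adjacent characters---the action of the big chain on characters is upper-triangular rather than diagonal, so individual characters need not contract under one step. The technical heart of the argument will be identifying a weighted $L^2$-norm (equivalently, a Lyapunov functional on joint characters) that contracts at rate $1 - \Theta(1/n^2)$ per step, by exploiting the hierarchy of columns to absorb the off-diagonal character dynamics. Once such a contraction is established, the $O(n^2)$ total variation mixing bound of Theorem~\ref{t:main} follows routinely.
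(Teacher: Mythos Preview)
Your per-column observation is correct and matches the paper's: each column evolves as (a reversal of) the East model with $p=1/2$, and the known $\Theta(1)$ spectral gap for that model gives the $O(n^2)$ single-column mixing you claim (your informal bottom-to-top induction is not itself a proof---the ``fair coin'' $X^{(k)}_{j+1}$ is correlated with the past of $X^{(k)}_j$---but the conclusion follows from the East-model literature). The genuine gap is the combination step, and it is both quantitative and structural. Quantitatively: the initial $\chi^2$-distance from the identity is $2^{\binom{n}{2}}-1$, so a uniform per-step contraction of $1-c/n^2$ on any weighted $L^2$ norm gives only a constant-factor reduction after $Cn^2$ steps---nowhere near enough. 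Indeed Stong's sharp spectral gap is $\Theta(1/n)$ for the discrete lazy walk, already better than your $1-c/n^2$, and it still only yields $O(n^3)$ via $L^2$; to reach $O(n^2)$ by this route you would have to show that almost all of the $\chi^2$ mass lies on eigenspaces with eigenvalue $1-\Theta(1)$, which your outline does not address. Structurally: you have not proposed any candidate Lyapunov functional, and since (as you note) the transition operator is non-diagonal on your abelian characters while the columns are coupled through the shared index sequence $(i_t)$, finding such a functional \emph{is} the entire problem.

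The paper's route avoids $L^2$ altogether. It isolates the row-$(n{-}1)$ updates: their scalars $a_k$ affect only column $n$, and one obtains $X_T=Y_T+\sum_k a_k\,Y_{T-s_k}E_{n-1,n}$, where the backwards column-operation walk $Y_t$ is independent of the $a_k$. On the event $\mathcal{A}$ that the vectors $\{Y_{T-s_k}e_{n-1}\}$ span $\Z_2^{\,n-1}$, the last column is \emph{exactly} uniform given the rest, yielding the clean recursion $d_n(T)\le d_{n-1}(T)+\P(\mathcal{A}^c)$. A union bound over $b\in\Z_2^{\,n-1}\setminus\{0\}$, using that $t\mapsto b\cdot Y_t$ is again an East model together with a Markov-chain Hoeffding inequality, gives $\P(\mathcal{A}^c)\le e^{-c_1(T-c_2 n)}$. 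The exponential tail in $T$ makes the column sum $\sum_{m\le n}\P(\mathcal{A}(T,m)^c)$ a geometric series, so no $\log n$ is lost. This exponential slack from the spanning/large-deviation argument---not any $L^2$ contraction---is the mechanism your plan is missing.
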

This bound is tight up to constants as it is known that the mixing time is at least order $n^2$ (see e.g. \cite{Stong:95}).  Indeed the projection onto the rightmost column of the matrix is itself a Markov chain given by an East model (see Section~\ref{s:East}) which has mixing time of order $n^2$.

\subsection{Background}

The random walks on $G_n(q)$ have received significant attention.  There are two natural extensions to fields $\Z_q$ for $q$  prime.  We will focus on the walk considered by Coppersmith and Pak~\cite{Pak:00,CopPak:00}.
This walk is given by the set of generators $\{I+ a E_{i,i+1}:1\leq i \leq n-1,a\in \Z_q\}$ where $E_{i,j}$ denotes the $n\times n$ matrix with 1 at position $(i,j)$ and 0 everywhere else.  We denote this walk as $\mathcal{W}_1$.   An equivalent description is that each step entails choosing a uniformly random $1\leq i \leq n-1$ and $a\in \Z_q$ and adding row $i+1$ multiplied by $a$ to row $i$.  This is an ergodic Markov chain reversible with respect to the uniform distribution on $G_n(q)$.  When $q=2$ this corresponds to the lazy version of our upper triangular walk of Theorem~\ref{t:main}.

The other extension, which we will denote $\mathcal{W}_2$, is the walk on $G_n(q)$ given by generators $\{I \pm E_{i,i+1}:1\leq i \leq n-1\}$ which has been studied by Diaconis, Saloff-Coste, Stong and others~\cite{DiaSal:94,Stong:95,ADS:04}.  In the case $q=2$ it is exactly the upper triangular walk of Theorem~\ref{t:main}.
Random walks of the form $\mathcal{W}_2$ were first studied by Zack~\cite{Zack:84} in the case $n=3$ as a random walk on the Heisenberg Group.  In the setting of $q$ growing and $n$ fixed,  Diaconis and Saloff-Coste~\cite{DiaSal:94,DiaSal:95,DiaSal:96} introduced a number of sophisticated techniques giving sharp rates of convergence.

In the case of $n$ growing the first results follow from work of Ellenberg~\cite{Ellenberg:93} which gives an upper bound of order $n^7$.  This was subsequently improved by Stong who gave sharp bounds on the spectral gap of $\Theta(n^{-1}q^{-2})$ which translates into an upper bound on the mixing time of $n^3 q^2\log q$.

The use of character theory has proved to be a key tool in analyzing random walks on groups (see e.g. \cite{DiaSha:81,Diaconis:03}).
One reason random walks on $G_n(q)$ have resisted analysis is the lack of understanding of the character theory of $G_n(q)$ which is ``considered unknowable''~\cite{ADS:04}.  Arias-Castro, Diaconis and Stanley~\cite{ADS:04} approached the problem  using the super-character theory of Andre, Carter and Yan and gave a sharp analysis of a related chain whose generators are the conjugacy classes of $I \pm E_{i,i+1}$.  Using comparison, this approach yielded an upper bound on the mixing time of $O(n^4 q^2 \log(n)\log(q))$ for the chain $\mathcal{W}_2$.

Using a stopping time argument, Pak~\cite{Pak:00} showed that when $q>2n^3$, the mixing time of the walk $\mathcal{W}_1$ is $O(n^{5/2})$.  Under the same assumption on the growth of $q$, this was subsequently improved by Coppersmith and Pak~\cite{CopPak:00} to a mixing time of $O(n^2)$ which is optimal.  By taking $q$ so large, they can effectively assume that once an entry is updated to a non-zero entry it remains non-zero.

Our work resolves the case of fixed $q$ and also gives new bounds when $q$ is growing with $n$.
\begin{maintheorem}\label{t:generalQ}
There exists an absolute constant $C>0$ such that the mixing time of the lazy upper triangular walk $\mathcal{W}_1$ on $G_n(q)$ satisfies
\[
\tmix^{(n)} \leq C  n^2 \log q.
\]
\end{maintheorem}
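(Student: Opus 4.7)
My plan is to extend the proof of Theorem~\ref{t:main} (the $q=2$ case) to general prime $q$. The crucial structural observation---which the authors advertise as the reason the proof extends---is linearity: if $M_{t+1}=(I+a_tE_{i_t,i_t+1})M_t$, then column $j$ of $M_{t+1}$ equals $(I+a_tE_{i_t,i_t+1})$ applied to column $j$ of $M_t$. Hence each column of $M_t$ evolves as an autonomous Markov chain on $\Z_q^j$ (the diagonal entry is always $1$ and the entries below are always $0$), driven by the shared random sequence $\{(i_t,a_t)\}_{t\geq 0}$. The rightmost column is the $\Z_q$-generalisation of the East chain that drives the $q=2$ proof, while the $j$-th column obeys the rightmost-column dynamics of the smaller group $G_j(q)$ run at slower rate $(j-1)/(n-1)$. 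An inductive, column-by-column argument of the kind used in the proof of Theorem~\ref{t:main} should therefore reduce the matrix mixing bound to the rightmost-column mixing bound.

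Next I would analyse the rightmost-column chain on $\Z_q^{n-1}$ (with $\sigma_n\equiv 1$). A step sets $\sigma_i\leftarrow \sigma_i+a\sigma_{i+1}$ for uniform $a\in\Z_q$, and crucially, whenever $\sigma_{i+1}\neq 0$ the new $\sigma_i$ is uniform in $\Z_q$, since $\Z_q$ is a field. I would split the analysis into a ``support'' part $\mathbf{1}\{\sigma_i\neq 0\}$ and a ``value'' part. The support dynamics are an East-type process essentially identical to the $q=2$ case, and in fact easier, because a refreshed site becomes non-zero with probability $1-1/q\geq 1/2$; the strong-stationary-time argument from Theorem~\ref{t:main} should therefore apply to show that the support reaches near-stationarity in $O(n^2)$ steps, after which a single further refresh per site suffices to randomise each value uniformly in $\Z_q$.

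The $\log q$ factor enters when converting the strong-stationary-time failure probability into a TV bound: the state space has size $q^{\binom{n}{2}}$, and passing from ``constant probability of being stationary'' to ``TV~$\leq \epsilon$'' on a state space of this size typically costs a multiplicative $\log q$ factor. Concretely, I would iterate the $q=2$-style strong-stationary argument over $\lceil \log_2 q\rceil$ successive epochs of length $O(n^2)$; each epoch contracts the total variation distance by a constant factor that must be bounded away from $1$ \emph{uniformly} in $q$, because a single East-style refresh already randomises a whole $\Z_q$-direction rather than merely a bit. The main obstacle is precisely this uniform-in-$q$ contraction estimate: one must avoid any crude union bounds over $q$-valued choices, and instead use the linear structure to show that the strong-stationary construction succeeds with probability bounded below independently of $q$ on each epoch. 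Doing this carefully is what confines the $q$-dependence to the outer loop rather than letting it propagate polynomially through the East-model analysis.
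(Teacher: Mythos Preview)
Your proposal has a genuine gap at the reduction step. You correctly note that column $j$ evolves as an autonomous chain, but since all columns are driven by the \emph{same} random sequence $\{(i_t,a_t)\}$, marginal mixing of each column does not yield joint mixing of the matrix. For the induction $d_n(T)\le d_{n-1}(T)+\varepsilon$ to go through you must show that the last column is close to uniform \emph{conditionally} on the first $n-1$ columns, and a strong-stationary-time analysis of the last-column chain alone says nothing about this conditional law. (Incidentally, in this paper Theorem~\ref{t:main} has no separate proof to extend: it is obtained as the $q=2$ case of Theorem~\ref{t:generalQ}.)

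The paper secures the needed conditional independence by isolating the scalars $a_k$ attached to the row operations with $i=n-1$, which affect only the last column. Defining a backwards process $Y_t$ from the remaining updates gives
\[
X_T = Y_T + \sum_{k=1}^{J(T)} a_k\,Y_{T-s_k}E_{n-1,n},
\]
and since the $a_k$ are independent of $\mathcal{F}$, the last column is exactly uniform given $\mathcal{F}$ on the event that the vectors $\{Y_{T-s_k}E_{n-1,n}\}$ span $\Z_q^{n-1}$. The spanning event is controlled by a union bound over all nonzero test vectors $b\in\Z_q^{n-1}$: for each $b$, the inner-product process $b\cdot Y_t$ is a $q$-state East model, and a spectral-gap plus Markov-chain Hoeffding estimate bounds the probability that $b$ witnesses non-spanning. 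It is this union bound over $q^{n-1}$ test vectors, together with a $q^{n/2}$ prefactor in the concentration inequality, that forces $T\gtrsim n\log q$ in continuous time and hence yields the $\log q$ in the statement---not an outer loop of $\log q$ epochs, and not the size $q^{\binom{n}{2}}$ of the full state space.
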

This yields new bounds when $q < 2n^3$ and so is outside of the regime studied by Coppersmith and Pak.  As in the case $q=2$ there is a natural lower bound of order $n^2$.  We conjecture that the mixing time is order $n^2$ for all $q$ which interpolates between our fixed $q$ result and Coppersmith and Pak's large $q$ result.

Our proof crucially makes use of the linear structure of $G_n(q)$. When multiplying by $I+ a E_{n-1,n}$ we observe that the choice of $a$ only affects the final column.  Indeed we can write the off-diagonal portion of the final column at time $t$ as $b_0 + \sum_{k=1}^{J} a_k b_k$ for $b_k\in \Z_p^{n-1}$ where the $a_k\in \Z_p$ are uniformly chosen independent of the $b_k$ and the remainder of the matrix.  It follows that the final column will be uniform and  independent of the rest of the matrix if the vectors $\{b_1,\ldots,b_J\}$ span $\Z_p^{n-1}$.

We prove that for large enough $C$ and $t=C n^2 \log q$ that the $b_k$ do span with high probability by expressing the vectors as projections at random times of another matrix walk.  This walk is a reflection of the original walk involving column rather than row operations.  By the linear structure of the walk, the inner product of the a fixed vector and the projection of the Markov chain evolves as a site in the East model.  Then using known spectral gap estimates for the East model and large deviation results for Markov chains we obtain the necessary bounds on when the $b_k$ span $\Z_p^{n-1}$.  Iterating this argument over all the columns completes the proof.

\subsection{The East model}\label{s:East}
A key role is played by the one dimensional East model which is an extensively studied example of a kinetically constrained spin model  (see~\cite{AldDia:02,CMRT:08} and references therein).  Formally the East model with parameter $0<p<1$ is a Markov chain whose state space is
\[
\mathcal{H}_n=\{h=(h_1,\ldots,h_L)\in\{0,1\}^n:h_1=1\}.
\]
The dynamics are given as follows.  In each step choose a uniformly random $1\leq i \leq n-1$ and
\begin{itemize}
\item If $h_{i}=1$ set $h_{i+1}$ to 1 with probability $p$ and 0 with probability $1-p$.
\item  If  $h_{i}=0$ do nothing.
\end{itemize}
The dynamics are reversible with respect to the i.i.d. Bernoulli measure where the $h_i$ are 1 with probability $p$ for $2\leq i \leq n$.
When $q=2$ the East Model with $p=1/2$ describes the dynamics of a single column of the upper triangular matrix walk.

\section{Preliminaries}

It will be convenient for our proof to consider the continuous time version of $\mathcal{W}_1$ whose generator is given by
\begin{equation}\label{e:generator}
(\mathcal{L}f)(X)=\sum_{i=1}^{n-1} \frac1q \sum_{a\in\Z_q} f((I+ a E_{i,i+1})X)-f(X)
\end{equation}
This version admits the following description: for each $1\leq i \leq n-1$ we have a Poisson clock and when clock $i$ rings we choose a uniform $a\in \Z_q$ and add row $i+1$ multiplied by $a$ to row $i$.  Note that this speeds the chain up by a factor of $n-1$ of the original.  Let $X_t$ denote this Markov chain.  The continuous time representation has the useful property that for $1< n' < n$ the process $X_t$ projected onto the $n'\times n'$-submatrix of its first $n'$ rows and columns is itself a Markov chain whose law is given by the walk on $G_{n'}(q)$.

In this paper we consider mixing measured in the total variation distance.  For two probability measures $\nu_1,\nu_2$ on a countable space $\Omega$ the {\bf total variation distance} is defined as
\[
\|\nu_1-\nu_2\|_\tv = \max_{A\subset \Omega} |\nu_1(A)-\nu_2(A)| = \frac12\sum_{x\in\Omega} |\nu_1(x)-\nu_2(x)| ~.
\]
For an ergodic Markov chain $Y_t$ with stationary distribution $\nu$ we define
\[
\tmix(s) = \inf\{t: \max_{y\in \Omega} \|\P_y(Y_t\in \cdot) - \nu\|_\tv<s\} ~,
\]
and the (worst case) {\bf mixing time} is defined as $\tmix=\tmix(1/2e)$.  We denote the walk $\mathcal{W}_1$ on $G_n(q)$ with generator given in \eqref{e:generator} as $X_t=X_t^n$, its mixing time  as $\tmix^{(n)}$ and we let $d_n(t)$ denote $\|\P(X_t^n\in \cdot) - \pi^n\|_\tv$ where $\pi^n$ is the uniform distribution on $G_n(q)$ and the stationary distribution of $X_t$.

For an ergodic reversible Markov chain we denote the eigenvalues of the generator as $0=\lambda_1<\lambda_2\leq\ldots$ and the spectral gap, the second eigenvalue, as $\gap=\lambda_2$.
For any ergodic reversible finite Markov chain $\gap^{-1}\leq \tmix$ (c.f., e.g. \cites{AF,LPW}).
The mixing time of the lazy random walks and continuous time random walks are closely related.  Theorem 20.3 of~\cite{LPW} implies that if the mixing time of the continuous time random walk $\mathcal{W}_1$ is $O(n\log q)$ then the mixing time of the discrete time lazy version of the walk is $O(n^2 \log q)$.  As such for the rest of the paper we consider the continuous time version of the walk.

We will make use of Hoeffding type bounds for Markov chains.  There are various versions including results of Gillman~\cite{gillman:98} and sharper exponential rates given by Le{\'o}n and Perron~\cite{LeoPer:04}.
We will make use of Theorem~3.4 of~\cite{Lezaud:98} by Lezaud which gives a continuous time Markov chain analogue.  In particular it shows that for a reversible Markov chain $Y_t$ with stationary distribution $\nu$, and for any subset $A\subset \Omega$,
\begin{equation}\label{e:MCHoeffding}
\P\left(\left|\int_0^t I(Y_t \in A) dt  -t \nu(A)\right|>\epsilon t\right) \leq \frac{2}{\sqrt{\nu_{\min}}} \exp(-t\epsilon^2 \lambda/12)
\end{equation}
where $\nu_{\min}=\min\{\nu(x):x\in\Omega,\nu(x)>0\}$.

\subsection{Continuous time East models}\label{s:East}
We will also use the continuous time version of the East Model.
Its dynamics on $\mathcal{H}_n$ are given as follows.  For each $1\leq i \leq n-1$ we have a rate 1 Poisson clock.  When clock $i$ rings:
\begin{itemize}
\item If $h_{i}=1$ set $h_{i+1}$ to 1 with probability $p$ and 0 with probability $1-p$.
\item  If  $h_{i}=0$ do nothing.
\end{itemize}
Again the dynamics is reversible with respect to the i.i.d. Bernoulli measure where the $h_i$ are 1 with probability $p$ for $2\leq i \leq n$. Let $\theta_p(n)$ denote the spectral gap of the East model of length $n$ with parameter $p$.
Aldous and Diaconis~\cite{AldDia:02} showed that $\theta_p:=\inf_n\theta_p(n)>0$ for all $p$, establishing a bounded spectral gap for the infinite dynamics, and also giving the asymptotics as $p$ tends to 0.

We now consider a  natural $q$-state extension of the East model.  Define the state-space as
\[
\mathcal{H}_n^q=\{h=(h_1,\ldots,h_n)\in \Z_q^n:h_1=1\}
\]
and again for each $1\leq i \leq n-1$ we have rate 1 Poisson clocks so that when  clock $i$ rings:
\begin{itemize}
\item If $h_{i}\neq 0$ set $h_{i+1}$ to a uniform choice in $\Z_q$.
\item  If  $h_{i}=0$ do nothing.
\end{itemize}
Let $\lambda^q(n)$ denote the spectral gap of this process.
\begin{proposition}\label{p:SpectralGap}
The spectral gaps satisfy $\lambda^\star:=\inf_{q,n} \lambda^q(n)>0$.
\end{proposition}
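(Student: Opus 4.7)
The plan is to reduce the spectral gap of the $q$-state East chain to that of the classical $2$-state East model via the projection $\sigma_i := \one\{h_i \neq 0\}$. When clock $i$ fires with $h_i\neq 0$, the new value $h_{i+1}$ is uniform on $\Z_q$, so $\sigma_{i+1}$ becomes Bernoulli$(p_q)$ with $p_q=(q-1)/q\in[1/2,1)$ independently of everything else; hence $(\sigma_t)$ is itself a Markov chain, namely the continuous-time East model at parameter $p_q$. Aldous--Diaconis gives $\inf_n\theta_{p_q}(n)>0$ for each $q$, and a short monotonicity or uniformity argument in $p_q$ upgrades this to $\theta^\star := \inf_{q\geq 2,\,n\geq 1}\theta_{p_q}(n)>0$, independent of $q$.

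I would then use the variance decomposition $\var_\pi(f) = \var_{\bar\pi}(g) + \E_{\bar\pi}[\var_{\pi_\sigma}(f\mid\sigma)]$, where $g(\sigma)=\E_\pi[f\mid\sigma]$ and $\pi_\sigma$ is the conditional law of $h$ on the fiber $\{\sigma(h)=\sigma\}$. For the skeleton term, the $2$-state East gap yields $\var_{\bar\pi}(g)\leq (\theta^\star)^{-1}\bar{\mathcal{E}}(g,g)$, and a one-step Jensen computation---bounding $p_q(1-p_q)(g(\sigma^{i+1,1})-g(\sigma^{i+1,0}))^2$ by the full single-site variance of $f$ over $h_{i+1}$ uniform on $\Z_q$---yields $\bar{\mathcal{E}}(g,g)\leq \mathcal{E}(f,f)$.

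The bulk of the work, and the main obstacle, is bounding the fiber term $\E_{\bar\pi}[\var_{\pi_\sigma}(f\mid\sigma)]$. The natural restricted dynamics on a fiber is reducible---at any $j$ with $\sigma_{j-1}=0$ the value $h_j$ cannot be changed without leaving the fiber---so a plain coordinate-wise tensorisation of $\var_{\pi_\sigma}$ produces terms that are not controlled by $\mathcal{E}(f,f)$ with an absolute constant. I would circumvent this by lifting to an auxiliary chain on $(\sigma,u)\in \mathcal{H}_n\times(\Z_q\setminus\{0\})^n$ in which each firing clock simultaneously refreshes $\sigma_{i+1}$ by an independent Bernoulli$(p_q)$ and $u_{i+1}$ by an independent uniform on $\Z_q\setminus\{0\}$, with the identification $h_j=\sigma_j u_j$. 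The lifted stationary measure is a genuine product, so the conditional variance tensorises cleanly over $u$; the ``missing'' contributions at positions with $\sigma_{j-1}=0$ are then absorbed by a second application of the $2$-state East Poincar\'e inequality applied to the function of $\sigma$ that records the local variance contribution at site $j$, exploiting the cascading mechanism that flipping $\sigma_{j-1}$ from $0$ to $1$ requires a chain of $2$-state East moves whose total cost is bounded by $(\theta^\star)^{-1}\bar{\mathcal{E}}$.

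Assembling the estimates gives $\var_\pi(f)\leq C(\theta^\star)^{-1}\mathcal{E}(f,f)$ for a universal constant $C$, whence $\lambda^q(n)\geq \theta^\star/C>0$ uniformly in $q$ and $n$. The principal difficulty is the reconciliation of the activation indicator $\one\{h_{j-1}\neq 0\}$ appearing in the Dirichlet form with the indicator $\one\{h_j\neq 0\}$ that emerges from the tensorised fiber variance; this is the same cascading obstruction that makes the East model gap nontrivial in the first place, and overcoming it is where the second invocation of the Aldous--Diaconis bound is essential.
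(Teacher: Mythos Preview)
Your route is genuinely different from the paper's, and the fiber step contains a real gap. After tensorising over $u$, the fiber contribution is $\sum_j \E[\var_{u_j}(f)]$, while the Dirichlet form only offers $\E[\sigma_{j-1}\var_{(\sigma_j,u_j)}(f)]$, with the constraint sitting at $j-1$. Your plan to absorb the $\sigma_{j-1}=0$ part by applying the $2$-state East Poincar\'e inequality to $G_j(\sigma):=\E_u[\var_{u_j}(f)]$ does not close: a Poincar\'e inequality bounds $\var_\sigma(G_j)$, but the quantity you must control is the \emph{expectation} $\E[(1-\sigma_{j-1})G_j]$, and since $\var_{u_j}(f)$ depends on $\sigma_{j-1}$ through $h_{j-1}=\sigma_{j-1}u_{j-1}$ you cannot simply factor out $\E[\sigma_{j-1}]=p_q$. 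Nothing in $\var_\sigma(G_j)\le(\theta^\star)^{-1}\bar{\mathcal{E}}(G_j,G_j)$ manufactures the missing constraint. Moreover, even if each $j$-term were bounded by $C\,\mathcal{E}(f,f)$, the sum over $j$ costs a factor $n$, destroying uniformity. Handling this at the Dirichlet-form level is precisely the content of the CMRT machinery for East-type KCM with general single-site space, not a one-line consequence of Aldous--Diaconis. A smaller loose end: your ``short monotonicity or uniformity argument'' for $\inf_q\theta_{p_q}>0$ is not obvious---monotonicity of the East gap in $p$ is not elementary---and the paper instead invokes the explicit CMRT bound $\theta_p(n)>\tfrac12$ for $p$ near~$1$, combined with $\inf_n\theta_p(n)>0$ for the finitely many remaining $q$.

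The paper sidesteps all of this by bounding total variation rather than comparing Dirichlet forms. Conditionally on the projected path $Z_t'=\psi(Z_t)$, each nonzero coordinate of $Z_t$ is uniform on $\Z_q\setminus\{0\}$ once it has been refreshed at a time its left neighbour was nonzero; hence
\[
\|\P(Z_t\in\cdot)-\pi\|_\tv \;\le\; \|\P(Z_t'\in\cdot)-\bar\pi\|_\tv + \P(\mathcal{B}(t)^c),
\]
where $\mathcal{B}(t)$ is the event that every site has seen such a refresh by time $t$. The first term decays like $e^{-\theta_{p_q}t}$; the second is handled by a Markov-chain Hoeffding bound (each site spends a positive fraction of time nonzero, by the East gap) plus a Poisson-clock estimate. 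The exponential decay rate of the left side then lower-bounds $\lambda^q(n)$ by $c\,\theta_{p_q}\wedge\tfrac13$, and uniformity in $q$ follows from the CMRT input above. This coupling argument is short and never needs to reconcile the $j$ versus $j-1$ constraint mismatch that blocks your Poincar\'e approach.
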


\begin{proof}
Let $Z_t$ denote the $q$-state East model and let $\psi:\mathcal{H}^q_n\to \mathcal{H}_n$ given by
\[
\psi(h)_i = \begin{cases}
0 &\hbox{if } h_i=0,\\
1 &\hbox{otherwise.}
\end{cases}
\]
The process $Z_t'=\psi(Z_t)$ has the law of the standard East model with parameter $p=\frac{q-1}{q}$.

We now consider an update of $Z_t$ where clock $i$ rings and $Z_t(i)\neq 0$.  The site $i+1$ is updated to 0 with probability $\tfrac1q$ and from a uniform choice on $\{1,\ldots,q-1\}$ with probability $\tfrac{q-1}{q}$.  In particular it is uniformly distributed, conditional on the value $Z_t'(i+1)$.  It is easily verified that this conditional independence property holds at all times after such an update and is independent of the process at other sites.

So let $\mathcal{B}(t)$ be the event that for each $1\leq i \leq n-1$ there has been an update from clock $i$ at a time $s$ when $Z_s(i)\neq 0$ before time $t$.  It follows that on the event $\mathcal{B}(t)$ the variable $Z_t$ is uniformly distributed conditional on the value of $Z_t'$.  Hence we have that
\begin{align}\label{e:ZtTvDist}
\|\P(Z_t\in\cdot)-\P(Z\in\cdot)\|_{\tv} \leq \|\P(Z_t'\in\cdot)-\P(\psi(Z)\in\cdot)\|_{\tv} + \P(\mathcal{B}^c(t)) \, ,
\end{align}
where $Z$ is distributed according to the stationary distribution of $Z_t$.  Since the spectral gap of $Z_t'$ is $\theta_{\tfrac{q-1}{q}}(n)$ we have that
\begin{equation}\label{e:projectedVariationDist}
\|\P(Z_t'\in\cdot)-\P(\psi(Z)\in\cdot)\|_{\tv} \leq C(n,q) \exp(-t \theta_{\tfrac{q-1}{q}}(n)) \, ,
\end{equation}
for some constant $C(n,q)$ as the spectral gap gives the asymptotic convergence rate to stationarity.  Fix some $1\leq i \leq n-1$.  Then equation~\eqref{e:MCHoeffding} implies that for some $c>0$ and constant $C'(n,q)$
\begin{equation}
\P\left(\int_0^t I(Z_s(i)\neq 0) ds \leq \tfrac13 t\right) \leq C'(n,q) \exp(-c \theta_{\tfrac{q-1}{q}}(n) t).
\end{equation}
The times at which clock $i$ rings form a rate 1 Poisson process on $[0,t]$ which is independent of the process $Z_t(i)$.  Hence the chance that $\mathcal{B}$ fails is given by
\begin{align}\label{e:PsiCondIndep}
\P\left(\mathcal{B}^c(t)\right)&\leq \P\left(\exists 1\leq i \leq n-1, \int_0^t I(Z_t(i)\neq 0) dt\leq \tfrac13 t \right)\\
&\qquad+ (n-1)\P\left(\hbox{Poisson}(\tfrac13 t) = 0\right)\nonumber\\
&\quad \leq n C'(n,q) \exp(-c \theta_{\tfrac{q-1}{q}}(n) t) + n\exp(-\tfrac13 t).
\end{align}
Combining equations \eqref{e:ZtTvDist}, \eqref{e:projectedVariationDist} and \eqref{e:PsiCondIndep} we have
\begin{align*}
\|\P(Z_t\in\cdot)-\P(Z\in\cdot)\|_{\tv} &\leq C(n,q) \exp(-c \theta_{\tfrac{q-1}{q}}(n) t) \\
&\qquad + n C'(n,q) \exp(-c \theta_{\tfrac{q-1}{q}}(n) t) + n\exp(-\tfrac13 t)
\end{align*}
and so the exponential rate of decay is at least $c \theta_{\tfrac{q-1}{q}}\wedge \frac13$.
Hence we have that
\[
\lambda^q(n) \geq c \theta_{\tfrac{q-1}{q}}(n)\wedge \frac13.
\]
Theorem 4.2 of~\cite{CMRT:08} establishes that for some $0<p_0<1$, whenever $p_0<p<1$ we have that $\theta_p(n)>\frac12$.  Combining this with the fact that $\inf_n \theta_p(n)>0$ for all $p$ we have that
\[
\lambda^\star =\inf_{q,n} \lambda^q(n) \geq \frac13 \wedge \inf_{q,n} c \theta_{\tfrac{q-1}{q}}(n) > 0 \,
\]
which completes the proof.
\end{proof}

It was observed in~\cite{ADS:04}  that  the projection onto column $j$ of the upper triangular walk on $G_n(q)$ is given by an East model, specifically the $j$ length $q$-state East model.  Our proof makes use of this fact and additionally that the observation also holds for linear combinations of columns.

\section{Proof of Theorem~\ref{t:main}}

In this section we work exclusively with the continuous time version of the Markov chain.  We begin with some notation.  By symmetry we will assume that the Markov chain begins at $X_0=I$.  Fix some terminal time $T$.  First we split the clocks and associated row operations into those with $1\leq i \leq n-2$ and those with $i=n-1$.  Let $0<t_1<t_2<\ldots$ denote the times at which the Poisson clocks ring for  $1\leq i \leq n-2$ and let $W_j$ denote the associated row operation matrices.  We denote the $\sigma$-algebra generated by these updates as $\mathcal{F}$.
Let $N(t)=\max\{j \geq 0 : t_j \leq t\}$ and define the backwards process on the interval $[0,T]$ by $Y_0=I$ and
\[
Y_t=\prod_{j=0}^{N(T)-N(T-t)-1} W_{N(T)-j} = W_{N(T)} W_{N(T)-1}\ldots W_{N(T-t)+1}
\]
and for $0\leq t < t' \leq T$ we let
\[
Y_{t,t'} = Y_{t}^{-1} Y_{t'}= W_{N(T-t)} W_{N(T-t)-1}\ldots W_{N(T-t')+1}.
\]
The process $Y_t$ is a Markov process given by the following column dynamics description.  For each $1\leq i \leq n-2$ we have a Poisson clock and when clock $i$ rings we choose a uniform $a\in \Z_q$ and add column $i$ multiplied by $a$ to column $i+1$.  With this description column $n$  is fixed while the walk on the top left $(n-1)\times(n-1)$ submatrix is equivalent to the walk $\mathcal{W}_1$ on $G_{n-1}(q)$ up to a reflection of the matrix.

Next consider the second type of row operations which have $i=n-1$.  Let $s_1<s_2<\ldots$ denote the times at which row $n$ is added to row $n-1$ and let $a_1,a_2,\ldots$ denote the associated scalers. Note that these are independent of $\mathcal{F}$.  Denoting
\[
J(t)=\max\{j \geq 0 : s_j \leq t\},
\]
as the number of row operations with $i=n-1$ up to time $t$. The following expansions of $X_T$ allow us to track the contribution of operations of the second type, taking advantage of the linear nature of $G_n(q)$.
By construction, with the definitions above, we have that
\begin{equation}\label{e:XtRecursion}
X_{s_\ell}=(I+a_{\ell}E_{n-1,n}) Y_{T-s_{\ell},T-s_{\ell-1}} X_{s_{\ell-1}}
\end{equation}
for $1\leq \ell \leq J(T)$ where $s_0=0$.  We now show, by induction, that
\begin{equation}\label{e:XtInduction}
X_{s_\ell}=Y_{T-s_\ell,T} + \sum_{k=1}^{\ell} a_k Y_{T-s_\ell,T-s_k} E_{n-1,n}.
\end{equation}
For $\ell=0$ equation \eqref{e:XtInduction} is immediate and for any upper triangular matrix $Y\in G_n(q)$ we have that
\begin{equation}\label{e:YcommuteEn}
E_{n-1,n}Y=E_{n-1,n} \hbox{ and } E_{n-1,n}E_{n-1,n}=0.
\end{equation}
Then by equation \eqref{e:XtRecursion}
\begin{align*}
X_{s_{\ell+1}}&=(I+a_{\ell+1}E_{n-1,n}) Y_{T-s_{\ell+1},T-s_{\ell}} X_{s_{\ell}}\\
&=(I+a_{\ell+1}E_{n-1,n}) Y_{T-s_{\ell+1},T-s_{\ell}} \left(Y_{T-s_\ell,T} + \sum_{k=1}^{\ell} a_k Y_{T-s_\ell,T-s_k} E_{n-1,n}\right)\\
&=Y_{T-s_{\ell+1},T} + \sum_{k=1}^{\ell+1} a_k Y_{T-s_{\ell+1},T-s_k} E_{n-1,n}
\end{align*}
where the final equality follows by expanding the sum, applying equation~\eqref{e:YcommuteEn} and using the fact that $Y_{T-s_{\ell+1},T-s_{\ell+1}}=I$ completing the inductive step.  Finally, since $X_{T}=Y_{0,J(T)}X_{J(T)}$, it follows from equation~\eqref{e:XtInduction} that
\begin{equation}\label{e:XtExpansion}
X_T=Y_T + \sum_{k=1}^{J(T)} a_k Y_{T-s_k} E_{n-1,n}.
\end{equation}
This expression allows us to separate the mixing in the first $n-1$ columns from that of the final column.  Let $\mathcal{M}$ denote set of $n\times n$-matrices with entries in $\Z_p$ and let $\mathcal{M}^*$ denote the sub-space of matrices spanned by $E_{i,n}$ for $1\leq i \leq n-1$.
Each matrix $Y_{T-s_k} E_{n-1,n}$ is in $\mathcal{M}^*$.  Let $\mathcal{A}=\mathcal{A}(T,n)$ denote the event that the collection $\{Y_{T-s_k} E_{n-1,n}:1\leq k \leq J(T)\}$ spans $\mathcal{M}^*$.  Since the $a_k$ are independent of the $Y_{T-s_k}$ it follows that on $\mathcal{A}$ the sum $\sum_{k=1}^{J(T)} a_k Y_{T-s_k} E_{n-1,n}$ is uniformly distributed on $\mathcal{M}^*$ and so we have the following proposition.
\begin{proposition}\label{p:tmixRecursion}
The distances to stationarity satisfy $$d_n(T) \leq d_{n-1}(T) + \P(\mathcal{A}(T,n)^c).$$
\end{proposition}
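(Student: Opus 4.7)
The plan is to split the law of $X_T$ into its top-left $(n-1)\times(n-1)$ block and its off-diagonal last column, and to exploit the expansion \eqref{e:XtExpansion} together with the independence of the scalars $\{a_k\}$ from $\mathcal{F}$ and the times $\{s_k\}$.

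First, every matrix in $\mathcal{M}^*$ is supported in column $n$, so \eqref{e:XtExpansion} shows that the top-left $(n-1)\times(n-1)$ submatrix of $X_T$ coincides with that of $Y_T$. The column dynamics defining $Y_T$ touch only columns $2,\ldots,n-1$, so column $n$ of $Y_T$ stays $e_n$, and the top-left block evolves---up to the measure-preserving reflection exchanging row and column operations on upper-triangular matrices---as the walk $\mathcal{W}_1$ on $G_{n-1}(q)$ started from $I$. Consequently, the total variation distance between the law of this top-left block and $\pi^{n-1}$ equals $d_{n-1}(T)$.

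Next, let $\mathcal{G}$ be the $\sigma$-algebra generated by $\mathcal{F}$ together with the times $\{s_k\}$; the scalars $\{a_k\}$ are i.i.d.\ uniform on $\Z_q$ and independent of $\mathcal{G}$, while the event $\mathcal{A}$ is $\mathcal{G}$-measurable. On $\mathcal{A}$ the vectors $\{Y_{T-s_k}E_{n-1,n}\}$ span the $(n-1)$-dimensional space $\mathcal{M}^*$, so conditionally on $\mathcal{G}$ the random sum $\sum_{k=1}^{J(T)} a_k Y_{T-s_k}E_{n-1,n}$ is uniform on $\mathcal{M}^*$. Combining this with the previous paragraph, on $\mathcal{A}$ the conditional law of $X_T$ given $\mathcal{G}$ is a product of a point mass on the top-left block of $Y_T$ and the uniform distribution on the off-diagonal entries of column $n$.

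To conclude, introduce the auxiliary distribution $\mu$ on $G_n(q)$ whose top-left block is distributed as that of $Y_T$ and whose off-diagonal last column is independent and uniform on $\Z_q^{n-1}$. Since $\pi^n$ factorizes as $\pi^{n-1}$ on the top-left block tensored with the uniform measure on the off-diagonal last column, one obtains $\|\mu-\pi^n\|_\tv = d_{n-1}(T)$. Coupling $X_T$ with a random matrix of law $\mu$ so that they agree on $\mathcal{A}$---possible because their conditional laws given $\mathcal{G}$ coincide on that event---gives $\|\P(X_T\in\cdot)-\mu\|_\tv \leq \P(\mathcal{A}^c)$, and the triangle inequality yields the proposition. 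The main (mild) subtleties are verifying the factorization of $\pi^n$ under the chosen split and that the reflection bijection intertwining the top-left block of $Y_T$ with $X_T^{(n-1)}$ preserves $\pi^{n-1}$; both are routine once the expansion \eqref{e:XtExpansion} is in hand.
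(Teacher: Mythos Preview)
Your proof is correct and follows essentially the same coupling strategy as the paper: split $X_T$ into its top-left $(n-1)\times(n-1)$ block and its last column, use that on $\mathcal{A}$ the last column is conditionally uniform given the remaining randomness, and combine with the $d_{n-1}(T)$ bound on the block. The only cosmetic difference is that you identify the top-left block via $Y_T$ and a reflection, whereas the paper uses directly that the top-left block of $X_T$ itself evolves as the row walk $\mathcal{W}_1$ on $G_{n-1}(q)$ (stated in the Preliminaries); since the top-left blocks of $X_T$ and $Y_T$ coincide by \eqref{e:XtExpansion}, the two routes are equivalent.
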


\begin{proof}
Let $X$ be uniform on $G_n(q)$, the stationary distribution of $X_t$.  As observed above, the Markov chain restricted to the first $n-1$ columns of $X_t$ gives the walk $\mathcal{W}_1$ on $G_{n-1}(q)$.  Hence we may couple the first $n-1$ columns of $X_T$ and $X$ except with probability $d_{n-1}(T)$.  Moreover the walk on the first $n-1$ columns is $\mathcal{F}$ measurable and independent of the scalars $\{a_1,\ldots,a_{J(T)}\}$.

On the event $\mathcal{A}(T,n)$ we have that $\sum_{k=1}^{J(T)} a_k Y_{T-s_k} E_{n-1,n}$ is uniformly distributed on $\mathcal{M}^*$ and independent of $\mathcal{F}$. Hence we may couple the final column of $X_T$ and $X$ independent  of $\mathcal{F}$ when $\mathcal{A}(T,n)$ holds.  Hence we may couple $X_T$ and $X$ except with probability $d_{n-1}(T) + \P(\mathcal{A}(T,n)^c)$ which completes the proposition.
\end{proof}

It remains to bound the probability of $\mathcal{A}$.  For this we use the fact that $Y_t$ is a Markov chain and its first $n-1$ columns are given by the column version of the walk $\mathcal{W}_1$.

\begin{proposition}\label{p:Span}
There exist absolute constants $0< c_1,c_2 < \infty$ such that $\P(\mathcal{A}(T,n)) \geq 1- e^{-c_1 (T-c_2 n\log q)}$.
\end{proposition}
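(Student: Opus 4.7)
The event $\mathcal{A}(T,n)^c$ holds iff there is a nonzero linear functional on $\mathcal{M}^* \cong \Z_q^{n-1}$ vanishing on every $Y_{T-s_k} E_{n-1,n}$, so the plan is to union bound over the dual space. I identify such a functional with a nonzero row vector $r = (r_1,\ldots,r_{n-1}) \in \Z_q^{n-1}$, extended to $(r,0) \in \Z_q^n$. A direct computation gives $\langle r, Y_t E_{n-1,n}\rangle = (r Y_t)_{n-1}$, so the obstruction at $r$ is $\{u_{T-s_k}(n-1) = 0 \text{ for all } k\}$ with $u_t := r Y_t$. The crucial structural observation is that $u_t$ evolves as a $q$-state East model from Section~\ref{s:East}: a column step $Y_t \mapsto Y_t(I + a E_{i,i+1})$ produces $u_t(i+1) \mapsto u_t(i+1) + a\, u_t(i)$, which (since $\Z_q$ is a field and $a$ is uniform) leaves $u_t(i+1)$ alone when $u_t(i) = 0$ and rerandomizes it uniformly on $\Z_q$ when $u_t(i) \ne 0$. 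Writing $j^\star = \min\{i : r_i \ne 0\}$, sites $i < j^\star$ stay at $0$ while $u_t(j^\star) = r_{j^\star}$ is frozen and nonzero, so $u_\cdot$ runs a $q$-state East model of length $n - j^\star + 1$ on sites $\{j^\star,\ldots,n\}$.

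Since clock $n-1$ is independent of $\mathcal{F}$, the marks $\{T - s_k\}_{k=1}^{J(T)}$ form a rate-$1$ Poisson process on $[0,T]$ independent of $u_\cdot$; therefore, conditionally on $u_\cdot$,
\[
\P\big(u_{T-s_k}(n-1) = 0 \text{ for all } k \,\big|\, u_\cdot \big) \;=\; \exp(-S_r(T)), \quad S_r(T) := \int_0^T \one\{u_t(n-1) \ne 0\}\, dt.
\]
I then lower bound $S_r(T)$ using the Lezaud bound~\eqref{e:MCHoeffding}. The $q$-state East model has $\nu(u(n-1) \ne 0) = (q-1)/q \ge 1/2$ for $j^\star \le n-2$ (and $S_r(T) = T$ trivially when $j^\star = n-1$), spectral gap $\ge \lambda^\star$ by Proposition~\ref{p:SpectralGap}, and $\nu_{\min} = q^{-(n-j^\star)}$. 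Applying~\eqref{e:MCHoeffding} with deviation $\epsilon = 1/6$ yields
\[
\P(S_r(T) < T/3) \;\le\; 2\, q^{(n-j^\star)/2}\, e^{-c_0 T}
\]
for an absolute $c_0 = \lambda^\star/432 > 0$. On the complement $\exp(-S_r(T)) \le e^{-T/3}$, so $\E[\exp(-S_r(T))] \le e^{-T/3} + 2\, q^{(n-j^\star)/2} e^{-c_0 T}$. There are at most $q^{n-j^\star}$ nonzero $r$ with leading index $j^\star$, and a union bound over $j^\star \in \{1,\ldots,n-1\}$ and such $r$ gives
\[
\P(\mathcal{A}(T,n)^c) \;\le\; q^n e^{-T/3} + 2n\, q^{2n}\, e^{-c_0 T},
\]
which has the form $\exp(-c_1 (T - c_2 n \log q))$ for absolute constants $c_1, c_2$.

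\textbf{Main obstacle.} The heart of the argument is the identification of $u_t = r Y_t$ as a genuine $q$-state East model; this is the single place the linear/field structure of $G_n(q)$ is used essentially, and it is what makes a walk on a non-abelian group tractable via the abelian East-model spectral gap. A secondary technical point is that the dual space has size $q^{n-1}$, so the Lezaud prefactor $\nu_{\min}^{-1/2} = q^{(n-j^\star)/2}$ must be overcome by the concentration rate $e^{-c_0 T}$; this tension is exactly what fixes the threshold $c_2 n \log q$ in the statement, and is why a uniform-in-$(n,q)$ spectral gap (Proposition~\ref{p:SpectralGap}) is indispensable.
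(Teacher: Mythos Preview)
Your proposal is correct and follows essentially the same approach as the paper: reduce spanning to a union bound over nonzero dual vectors $r$, identify $u_t = rY_t$ as a $q$-state East model anchored at the first nonzero coordinate of $r$, use the uniform spectral gap from Proposition~\ref{p:SpectralGap} together with the Lezaud bound~\eqref{e:MCHoeffding} to show $\int_0^T \one\{u_t(n-1)\ne 0\}\,dt \ge T/3$ with high probability, and combine with the independence of the rate-$1$ Poisson marks $\{T-s_k\}$. The only cosmetic differences are that the paper normalises $r_{j^\star}=1$ (you instead note it is frozen nonzero, which is equivalent for the dynamics), and the paper bounds $\P(\text{Poisson}(T/3)=0)$ directly rather than computing $\E[e^{-S_r(T)}]$; also your East model should run on sites $\{j^\star,\ldots,n-1\}$ rather than $\{j^\star,\ldots,n\}$ (site $n$ of $u_t$ is frozen at $0$), but this only makes your stated $\nu_{\min}$ an overestimate and does not affect the bound.
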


\begin{proof}
The vectors $\{Y_{T-s_k} E_{n-1,n}:1\leq k \leq J(T)\}$ span $\mathcal{M}^*$ if and only if for each  non-zero vector $b=(b_1,\ldots,b_{n-1},0)\in \Z_q^{n}\setminus \{(0,\ldots,0)\}$ we have that for some $1 \leq k \leq J(T)$ that
\begin{equation}\label{e:spanRequirement}
b \cdot Y_{T-s_k} e_{n-1} \neq 0.
\end{equation}
Let $l=\min_j b_j\neq 0$.  We may assume without loss of generality that $b_l=1$ since multiplying $b$ by a non-zero scaler does not affect \eqref{e:spanRequirement}.

Now recall our observation that $Y_t$ is the dynamics on upper triangular matrices where for each $1\leq i \leq n-2$  we add column $i$ times a random scaler $a$ to column $i+1$ according to the times of rate 1 Poisson clocks.  The process $Z_t=b \cdot Y_t$ is also a Markov chain on vectors of length $n$ with the following description. For each $1\leq i \leq n-2$ at rate 1 we add position $i$ times a random scaler $a$ to position $i+1$.  Its initial condition is $Z_0=b$ and so the value of its first $l-1$ entries and the final entry are fixed as 0.  The value in entry $l$ is fixed at 1 and entries $l$ to $n-1$ perform the $q$-state East model of length $n-l$.  By Proposition~\ref{p:SpectralGap} this chain has spectral gap at least $\lambda^\star>0$.  Its state space is the uniform distribution over $q^{n-l-1}$ possible vectors and so applying equation~\eqref{s:East} we have that
\begin{equation}\label{e:walkLemma}
\P\left(\int_0^T I(Z_t e_{n-1}\neq 0) dt \leq \tfrac13 T\right) \leq q^{n/2}  \exp(-c \lambda^\star T)
\end{equation}
where $I(\cdot)$ denotes the indicator and $c$ is an absolute constant.
The times $T-S_k$ form a rate 1 Poisson process on $[0,T]$ which is independent of the process $Y_t$.  Hence
\begin{align}\label{e:walkLemma2}
&\P\left(\forall 1 \leq k \leq J(T), Z_{T-s_k} e_{n-1} = 0\right)\nonumber\\
&\quad\leq \P\left(\int_0^T I(Z_t e_{n-1}\neq 0) dt\leq \tfrac13 T \right)
+ \P\left(\hbox{Poisson}(\tfrac13 T) = 0\right)\nonumber\\
&\quad \leq q^{n/2}  \exp(-c \lambda^\star T) + \exp(-\tfrac13 T).
\end{align}
Taking a union bound over $b$ completes the proof.
\end{proof}

We are now ready to prove Theorem~\ref{t:generalQ} which in turn proves Theorem~\ref{t:main}

\begin{proof}[Proof of Theorem~\ref{t:generalQ}]
As noted above it is sufficient by Theorem 20.3 of~\cite{LPW} to prove an upper bound on the mixing time of the continuous time chain~$\mathcal{W}_1$ is $O(n\log q)$.
Combining Propositions~\ref{p:tmixRecursion} and~\ref{p:Span} we have that
\[
d_n(T) \leq d_{n-1}(T) + e^{-c_1 (T-c_2 n\log q)}
\]
and hence by induction
\[
d_n(T)=\sum_{i=2}^n e^{-c_1 (T-c_2 i\log q)}.
\]
For $c>c_2$ we have that $d_n(c n\log q)=o(1)$ which completes the proof.
\end{proof}

\subsection*{Acknowledgements}
The authors would like to thank Persi Diaconis for suggesting the problem and for illuminating discussions and Jian Ding and Weiyang Ning for comments on an earlier draft.

\begin{bibdiv}
\begin{biblist}

\bib{AldDia:02}{article}{
  title={{The asymmetric one-dimensional constrained Ising model: rigorous results}},
  author={Aldous, D.},
  author={Diaconis, P.},
  journal={Journal of Statistical Physics},
  volume={107},
  pages={945--975},
  date={2002},
}

\bib{AF}{book}{
    AUTHOR = {Aldous, David},
    AUTHOR = {Fill, James Allen},
    TITLE =  {Reversible {M}arkov Chains and Random Walks on Graphs},
    note = {In preparation, \texttt{http://www.stat.berkeley.edu/\~{}aldous/RWG/book.html}},
}

\bib{ADS:04}{article}{
  title={{A super-class walk on upper-triangular matrices}},
  author={Arias-Castro, E.},
  author={Diaconis, P.},
  author={Stanley, R.},
  journal={Journal of algebra},
  volume={278},
  pages={739--765},
  date={2004},
}

\bib{CMRT:08}{article}{
  title={{Kinetically constrained spin models}},
  author={Cancrini, N.},
  author={Martinelli, F.},
  author={Roberto, C.},
  author={Toninelli, C.},
  journal={Probability Theory and Related Fields},
  volume={140},
  pages={459--504},
  year={2008},
}

\bib{CopPak:00}{article}{
  title={{Random walk on upper triangular matrices mixes rapidly}},
  author={Coppersmith, D.},
  author={Pak, I.},
  journal={Probability Theory and Related Fields},
  volume={117},
  pages={407--417},
  date={2000},
}

\bib{Diaconis:03}{article}{
  title={{Random walks on groups: characters and geometry}},
  author={Diaconis, P.},
   conference={
      title={Groups Saint Andrews 2001 in Oxford},
   },
  pages={120--142},
  year={2003},
  publisher={Cambridge Univ. Pr.}
}

\bib{DiaSal:94}{article}{
  title={{Moderate growth and random walk on finite groups}},
  author={Diaconis, P.},
  author={Saloff-Coste, L.},
  journal={Geometric and Functional Analysis},
  volume={4},
  pages={1--36},
  date={1994},
}

\bib{DiaSal:95}{article}{
  title={{An Application of Harnaek Inequalities to Random Walk on Nilpotent Quotients}},
  author={Diaconis, P.},
  author={Saloff-Coste, L.},
   conference={
      title={Proceedings of the Conference in Honor of Jean-Pierre Kahane},
   },
  pages={189},
  date={1995},
}

\bib{DiaSal:96}{article}{
  title={{Nash inequalities for finite Markov chains}},
  author={Diaconis, P.},
  author={Saloff-Coste, L.},
  journal={Journal of Theoretical Probability},
  volume={9},
  pages={459--510},
  date={1996},
}

\bib{DiaSha:81}{article}{
  title={{Generating a random permutation with random transpositions}},
  author={Diaconis, P.}
  author={Shahshahani, M.},
  journal={Probability Theory and Related Fields},
  volume={57},
  pages={159--179},
  date={1981},
}

\bib{Ellenberg:93}{thesis}{
  AUTHOR =       {Ellenberg, J.},
  TITLE =        {{A Sharp Diameter Bound for Upper Triangular Matrices}},
  SCHOOL =       {Senior Honors Thesis, Department of Mathematics, Harvard University},
  YEAR =         {1993},
}

\bib{gillman:98}{article}{
  title={{A Chernoff Bound for Random Walks on Expander Graphs}},
  author={Gillman, D.},
  journal={SIAM Journal on Computing},
  volume={27},
  pages={1203},
  date={1998}
}

\bib{LeoPer:04}{article}{
  title={{Optimal Hoeffding bounds for discrete reversible Markov chains}},
  author={Le{\'o}n, C.A.},
  author={Perron, F.},
  journal={Annals of Applied Probability},
  volume={14},
  pages={958--970},
  date={2004},
  publisher={JSTOR}
}

\bib{LPW}{book}{
  title={{Markov chains and mixing times}},
  author={Levin, D.A.},
  author={Peres, Y.},
  author={Wilmer, E.L.},
  journal={American Mathematical Society},
  date={2008},
}

\bib{Lezaud:98}{article}{
  title={{Chernoff-type bound for finite Markov chains}},
  author={Lezaud, P.},
  journal={The Annals of Applied Probability},
  volume={8},
  pages={849--867},
  date={1998},
}

\bib{Pak:00}{article}{
  title={{Two random walks on upper triangular matrices}},
  author={Pak, I.},
  journal={Journal of Theoretical Probability},
  volume={13},
  pages={1083--1100},
  date={2000},
}

\bib{SaloffCoste:96}{article}{
   author={Saloff-Coste, Laurent},
   title={Lectures on finite Markov chains},
   conference={
      title={Lectures on probability theory and statistics},
      address={Saint-Flour},
      date={1996},
   },
   book={
      series={Lecture Notes in Math.},
      volume={1665},
      publisher={Springer},
      place={Berlin},
   },
   date={1997},
   pages={301--413},
}

\bib{Stong:95}{article}{
  title={{Random walks on the groups of upper triangular matrices}},
  author={Stong, R.},
  journal={The Annals of Probability},
  volume={23},
  pages={1939--1949},
  date={1995},
  publisher={JSTOR}
}

\bib{Zack:84}{thesis}{
  AUTHOR =       {Zack, M.},
  TITLE =        {{A Random Walk on the Heisenberg Group}},
  SCHOOL =       {University of California, San Diego},
  YEAR =         {1984},
}

\end{biblist}
\end{bibdiv}

\end{document}